\newcommand{\R}{\mathbb{R}}
\newcommand{\F}{\mathcal{F}}
\renewcommand{\H}{\mathcal{H}}
\newtheorem{lemma}{Lemma}[section]
\newtheorem{theorem}{Theorem}[section]
\numberwithin{equation}{section}
\newcommand{\eps}{\varepsilon}
\newcommand{\qtq}[1]{\quad\text{#1}\quad}
\begin{document}

\title[Stability]{Stability of nonlinear recovery from \\ scattering and modified scattering maps}

\author[G. Chen]{Gong Chen}
\address{Department of Mathematics, Georgia Institute of Technology}
\email{gc@math.gatech.edu}

\author[J. Murphy]{Jason Murphy} 
\address{Department of Mathematics, University of Oregon}
\email{jamu@uoregon.edu}

\maketitle

\begin{abstract} We prove stability estimates for the recovery of the nonlinearity from the scattering or modified scattering map for one-dimensional nonlinear Schr\"odinger equations.  We consider nonlinearities of the form $a(x) |u|^p u$ for $p\in [2,4]$ and $[1+a(x)]|u|^2 u$, where $a$ is a localized function.  In the first case, we show that for $p\in(2,4]$ we may obtain a H\"older-type stability estimate for recovery via the scattering map, while for $p=2$ we obtain a logarithmic stability estimate.  In the second case, we show a logarithmic stability estimate for recovery via the modified scattering map. 
\end{abstract}

\section{Introduction}

We consider nonlinear Schr\"odinger equations of the form
\begin{equation}\label{nls1}
i\partial_t u = -\Delta u + a(x)|u|^p u, \quad (t,x)\in\R\times\R
\end{equation}
and
\begin{equation}\label{nls2}
i\partial_t u = -\Delta u + [1+a(x)]|u|^2 u,\quad (t,x)\in \R\times\R.
\end{equation}
We take $p\geq 2$ and let $a$ be a localized inhomogeneity.  Our interest in this paper is the problem of recovering the coefficient $a(x)$ from the small-data scattering (or modified scattering) map.  More precisely, we seek stability estimates that control the difference between the inhomogeneities by the difference in the scattering (or modified scattering) maps.  

In what follows, given a Hilbert space $\H$ we write $B_\eta\subset \H$ to denote the ball $B_\eta=\{f\in \H:\|f\|_{\H}<\eta\}$, where $\eta$ is always understood to be a small parameter.  We write $e^{it\Delta}$ for the free Schr\"odinger propagator, defined as the Fourier multiplier operator with symbol $e^{-it\xi^2}$. We use $H^{1,1}$ to denote the weighted Sobolev space, consisting of $f\in H^1$ for which $xf\in L^2$.

\bigskip

\textbf{Scattering map.} We begin by discussing the scattering map for \eqref{nls1}.  We first state what is meant precisely by small-data scattering.

\begin{theorem}[Small-data scattering]\label{T:scatter1} Let $2\leq p\leq 4$ and $a\in L^{\frac{2}{4-p}}(\R)$. There exists $0<\eta\ll 1$ such that for any $u_0\in B_\eta\subset L^2(\R)$, there exists a unique global solution $u$ to \eqref{nls1} and $u_+\in L^2(\R)$ such that
\begin{equation}\label{scattering}
u|_{t=0}=u_0 \qtq{and} \lim_{t\to\infty} \|u(t)-e^{it\Delta} u_+\|_{L^2(\R)} = 0.
\end{equation}
\end{theorem}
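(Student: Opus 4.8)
The plan is to solve \eqref{nls1} by the standard Duhamel/contraction-mapping scheme, with the integrability exponent $\tfrac{2}{4-p}$ dictating a single sharp Strichartz space. Writing the solution in Duhamel form
\[
u(t) = e^{it\Delta} u_0 - i\int_0^t e^{i(t-s)\Delta}\,\big[a\,|u|^p u\big](s)\,ds,
\]
I would run a fixed-point argument for the map $\Phi$ defined by the right-hand side on a small ball in the Strichartz space $X(I) := L^q_t L^r_x(I\times\R)$, where $(q,r)$ is the admissible pair $q=p+2$, $r=\tfrac{2(p+2)}{p-2}$ for $p\in(2,4]$ and the endpoint $(q,r)=(4,\infty)$ for $p=2$; in each case $\tfrac2q+\tfrac1r=\tfrac12$. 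The required inputs are the homogeneous and inhomogeneous Strichartz estimates, which yield $\|e^{it\Delta}u_0\|_{X}\lesssim\|u_0\|_{L^2}$ together with $\big\|\int_0^t e^{i(t-s)\Delta}F\,ds\big\|_{X\cap L^\infty_t L^2_x}\lesssim \|F\|_{L^{q'}_t L^{r'}_x}$.

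The heart of the matter is the nonlinear estimate
\[
\big\|a\,|u|^p u\big\|_{L^{q'}_t L^{r'}_x}\lesssim \|a\|_{L^{2/(4-p)}}\,\|u\|_{X}^{\,p+1}.
\]
To prove it I would apply Hölder in $x$ at each fixed time, splitting the exponent as $\tfrac1{r'}=\tfrac{4-p}{2}+\tfrac{p+1}{r}$, which places $a$ in $L^{2/(4-p)}_x$ and leaves $\big\||u|^{p+1}\big\|_{L^{r/(p+1)}_x}=\|u(t)\|_{L^r_x}^{p+1}$; then Hölder in $t$, using $(p+1)q'=q$, reproduces exactly $\|u\|_{L^q_t L^r_x}^{p+1}$. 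The two balance conditions $(p+1)q'=q$ and $\tfrac{p+1}{r}+\tfrac{4-p}{2}=\tfrac1{r'}$ are precisely what force the pair $(q,r)$ above and single out $L^{2/(4-p)}$ as the natural space for $a$; this is the computation that makes the whole scheme consistent. The matching difference estimate follows from the pointwise bound $\big||u|^p u-|v|^p v\big|\lesssim(|u|^p+|v|^p)|u-v|$ and the same Hölder bookkeeping, giving $\big\|a(|u|^pu-|v|^pv)\big\|_{L^{q'}_tL^{r'}_x}\lesssim \|a\|_{L^{2/(4-p)}}\big(\|u\|_X^p+\|v\|_X^p\big)\|u-v\|_X$.

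Combining these, for $\|u_0\|_{L^2}<\eta$ with $\eta$ small, $\Phi$ maps the ball $\{\|u\|_X\le 2C\eta\}$ into itself and contracts there, producing a unique solution $u$ with finite global Strichartz norm; the inhomogeneous estimate simultaneously places $u\in C_t L^2_x$, and uniqueness in the full solution class follows by the usual short-time argument. For scattering, finiteness of $\big\|a|u|^pu\big\|_{L^{q'}_tL^{r'}_x(\R\times\R)}$ lets me define $u_+ = u_0 - i\int_0^\infty e^{-is\Delta}[a|u|^pu](s)\,ds\in L^2$, and then $\|u(t)-e^{it\Delta}u_+\|_{L^2}=\big\|\int_t^\infty e^{i(t-s)\Delta}[a|u|^pu]\,ds\big\|_{L^2}\lesssim \big\|a|u|^pu\big\|_{L^{q'}_tL^{r'}_x([t,\infty)\times\R)}\to0$ as $t\to\infty$ by dominated convergence.

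The only genuinely delicate point is the endpoint $p=2$, where $(q,r)=(4,\infty)$ and the argument rests on the one-dimensional endpoint Strichartz estimate $\|e^{it\Delta}f\|_{L^4_tL^\infty_x}\lesssim\|f\|_{L^2}$ (valid in $1$D, in contrast to the forbidden $L^2_t$ endpoint in two dimensions); there $a\in L^1$ and the spatial Hölder step degenerates to the trivial bound $\big\||u|^3\big\|_{L^1_x}\le\|u\|_{L^\infty_x}^3$. Everything else is uniform across $p\in[2,4]$, so once the nonlinear estimate and this endpoint are in hand the contraction closes without further difficulty.
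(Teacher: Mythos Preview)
Your argument is correct and follows the same contraction-mapping/Strichartz scheme as the paper's sketch; the only difference is a harmless choice of exponents---you work with the diagonal pair $(q,r)=(p+2,\tfrac{2(p+2)}{p-2})$ and its own dual, whereas the paper places the nonlinearity in the endpoint dual space $L^{4/3}_t L^1_x$ and the solution in $L_t^{4(p+1)/3}L_x^{2(p+1)/(p-2)}$, the two choices coinciding at $p=2$. One small slip: at $p=2$ the spatial H\"older step reads $\|a|u|^2u\|_{L^1_x}\le\|a\|_{L^1_x}\|u\|_{L^\infty_x}^3$, not the (false) inequality $\||u|^3\|_{L^1_x}\le\|u\|_{L^\infty_x}^3$ as you wrote.
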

This result is fairly standard; we sketch the proof in Section~\ref{S:scatter} below.  Using this theorem, we may define the \emph{scattering map} $S_a:B_\eta\subset L^2\to L^2$ by
\[
S_a(u_0)=u_+. 
\]
By now, it is well-known that the map $a\mapsto S_a$ is injective (see e.g. \cite{Strauss, Murphy, Weder1, Weder2, KMV1, KMV2}). Our first theorem is a quantitative version of this fact in the form of a H\"older-type stability estimate, in the spirit of our previous work \cite{ChenMurphy2}.

\begin{theorem}[Stability for \eqref{nls1}, $p>2$]\label{T1} Let $2< p\leq 4$, $a,b\in L^{\frac{2}{4-p}}\cap W^{1,\infty}$, and let $0<\eta\ll1$ be sufficiently small.  Let $S_a,S_b:B_\eta\subset L^2\to L^2$ denote the scattering maps for \eqref{nls1} with nonlinearities $a|u|^p u$, $b|u|^p u$, respectively, and define
\begin{equation}\label{Lip}
\|S_a-S_b\| := \sup\biggl\{\frac{\|S_a(\varphi)-S_b(\varphi)\|_{L^2}}{\|\varphi\|_{L^2}}: \varphi\in B_\eta\backslash\{0\}\biggr\}. 
\end{equation}
There exists $\theta=\theta(p)\in(0,\frac{p-2}{3p-2})$ such that if $0\leq \|S_a-S_b\|\ll 1$, then
\[
\|a-b\|_{L^\infty} \lesssim_p \|S_a-S_b\|^{\theta},
\]
with implicit constant depending on the $L^{\frac{2}{4-p}}$ and $W^{1,\infty}$ norms of $a$ and $b$. 
\end{theorem}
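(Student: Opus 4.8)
The plan is to extract the coefficient $a(x)$ from the leading-order behavior of the scattering map for small data, and then to bootstrap a quantitative version of this extraction into the desired stability estimate. The key heuristic is that for small initial data $u_0$, the scattering map admits an expansion $S_a(u_0) = u_0 + \mathcal{A}(u_0) + \text{(higher order)}$, where the first nonlinear correction $\mathcal{A}(u_0)$ is determined by the nonlinearity through a Born-type approximation. Concretely, writing the Duhamel formula for the solution $u$ and passing to the limit, one finds
\begin{equation}\label{eq:born}
S_a(u_0) = u_0 - i\int_0^\infty e^{-it\Delta}\bigl[a(x)|u(t)|^p u(t)\bigr]\,dt,
\end{equation}
and replacing $u(t)$ by the free evolution $e^{it\Delta}u_0$ produces the leading term. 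Thus the difference $S_a(u_0)-S_b(u_0)$ is, to leading order, a linear functional of $(a-b)$ tested against a nonlinear expression in $e^{it\Delta}u_0$. The first step is therefore to make this expansion rigorous with quantitative error bounds, showing that the remainder is of higher order in $\eta$ using the Strichartz/dispersive estimates that underlie Theorem~\ref{T:scatter1}.

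The second step is to choose a good family of test functions $u_0=\varphi$ designed to concentrate the Born term at a chosen point $x_0$, thereby reading off $a(x_0)-b(x_0)$. A natural choice is a rescaled, modulated bump: take $\varphi$ concentrated near $x_0$ at a spatial scale $\lambda$ and amplitude tuned so that $\|\varphi\|_{L^2}$ stays within $B_\eta$. Under the free evolution the profile spreads, and the dispersive decay $\|e^{it\Delta}\varphi\|_{L^\infty}\lesssim |t|^{-1/2}\|\varphi\|_{L^1}$ governs the size of the Born integral. The aim is to show that the pairing of \eqref{eq:born} against a suitable dual profile isolates $(a-b)$ evaluated near $x_0$, up to an error controlled by the modulus of continuity of $a-b$ (this is where $a,b\in W^{1,\infty}$ enters, giving Lipschitz control of the localization error). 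One then optimizes the scale $\lambda$ against the amplitude constraint and the higher-order remainder: the Born term scales like a positive power of $\lambda$ (with an exponent depending on $p$), while the nonlinear remainder scales like a strictly higher power, so that for small $\eta$ the leading term dominates and can be detected by $\|S_a-S_b\|$.

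The third step is the quantitative optimization that yields the Hölder exponent $\theta\in(0,\frac{p-2}{3p-2})$. The structure is a standard interpolation-type balancing: the leading (linear-in-$a-b$) signal is bounded below by $\|a-b\|_{L^\infty}$ times a power of the concentration scale, while the error terms — both the Born remainder and the localization error — are bounded above by $\|S_a-S_b\|$ plus higher powers of the scale and of $\|a-b\|_{L^\infty}$ itself. Equating the dominant error contributions and solving for the optimal scale produces a power-law relation $\|a-b\|_{L^\infty}\lesssim \|S_a-S_b\|^\theta$, with the admissible range of $\theta$ dictated precisely by the competition between the $|t|^{-1/2}$ dispersive gain iterated through the $p$-th power nonlinearity and the volume factors from the spatial concentration; the condition $p>2$ is exactly what makes this competition favorable (for $p=2$ the gain degenerates to a logarithm, explaining the weaker estimate in that case). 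I would carry out the scaling bookkeeping carefully to confirm the stated numerator $p-2$ and denominator $3p-2$.

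The main obstacle I anticipate is controlling the nonlinear remainder in the Born expansion uniformly across the full range $p\in(2,4]$ while simultaneously tracking its dependence on the concentration scale $\lambda$. The expansion \eqref{eq:born} is only useful if the difference between $u(t)$ and its free evolution contributes at strictly higher order, and near the endpoint $p=2$ the time integrals become only borderline convergent, so the estimates must be done with some care (likely via a bootstrap on a short-time/long-time decomposition and the $L^2$-critical Strichartz norms). Ensuring that the implicit constants depend only on the fixed norms $\|a\|_{L^{2/(4-p)}}$, $\|b\|_{L^{2/(4-p)}}$, $\|a\|_{W^{1,\infty}}$, $\|b\|_{W^{1,\infty}}$, and not on the test profile, is the crux of turning the qualitative injectivity into the claimed quantitative stability estimate.
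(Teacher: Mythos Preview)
Your proposal is correct and follows essentially the same route as the paper: apply the Born approximation to reduce $\langle S_a(\varphi)-S_b(\varphi),\varphi\rangle$ to $\int_0^\infty\!\int (a-b)|e^{it\Delta}\varphi|^{p+2}\,dx\,dt$ plus a higher-order error, then take $\varphi$ to be a rescaled Gaussian centered at $x_0$ so that this integral acts as an approximate identity recovering $(a-b)(x_0)$, and finally optimize the scale to obtain the H\"older exponent $\theta=\tfrac{s}{2+s}<\tfrac{p-2}{3p-2}$ for $s<1-\tfrac{2}{p}$. Two minor simplifications relative to your sketch: no modulation of the test function is needed (plain rescaled Gaussians suffice, since the approximate-identity kernel $\int_0^\infty|e^{it\Delta}\varphi|^{p+2}\,dt$ is already in $L^1$ for $p>2$), and the ``dual profile'' is simply $\varphi$ itself.
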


In fact, this result can be proven by a simple adaptation of the arguments appearing in our previous work \cite{ChenMurphy2}.  We include it here primarily to demonstrate that the exponent $\theta(p)$ that we can obtain degenerates to zero as $p\to 2$.  In particular, H\"older stability seems to break down for $p=2$ (although our techniques do not readily provide any counterexample). 

As a replacement, we show that at the $p=2$ endpoint we are able to obtain a \emph{logarithmic} stability estimate.  In particular, we will prove the following result.

\begin{theorem}[Stability for \eqref{nls1}, $p=2$]\label{T2} Let $a,b\in L^1\cap H^1$. Let $S_a,S_b:B_\eta\subset L^2\to L^2$ denote the scattering maps for \eqref{nls1} with nonlinearities $a|u|^2 u$, $b|u|^2 u$, respectively, and define $\|S_a-S_b\|$ as in \eqref{Lip}. If $0\leq \|S_a-S_b\|\ll 1$, then
\[
\|a-b\|_{L^\infty} \lesssim \biggl|\log\biggl(\frac{\|S_a-S_b\|}{1+\|a\|_{L^1\cap H^1}^2+\|b\|_{L^1\cap H^1}^2}\biggr)\biggr|^{-1}
\]
with implicit constant depending on the $L^1$ and $H^1$ norms of $a$ and $b$. 
\end{theorem}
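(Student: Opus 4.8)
The plan is to reduce the recovery to a quantitative smallness statement for the first Born approximation and then invert the resulting cubic resonant form frequency-by-frequency, paying a conditioning cost that grows rapidly in the frequency and is ultimately responsible for the logarithmic (rather than H\"older) rate. Writing $c=a-b$, I would first use Theorem~\ref{T:scatter1} to justify, for data $\varphi\in B_\eta$, the expansion $S_a(\varphi)=\varphi+\Gamma_a[\varphi]+E_a[\varphi]$, where $\Gamma_a[\varphi]=-i\int_0^\infty e^{-is\Delta}\bigl[a\,|e^{is\Delta}\varphi|^2 e^{is\Delta}\varphi\bigr]\,ds$ is the first Duhamel iterate and $E_a$ collects the higher iterates. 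The iterate $\Gamma_a$ is linear in $a$, so $\Gamma_a-\Gamma_b=\Gamma_c$, while a difference estimate built on the Strichartz bounds of the small-data theory gives $\|E_a[\varphi]-E_b[\varphi]\|_{L^2}\lesssim(1+\|a\|_{L^1\cap H^1}+\|b\|_{L^1\cap H^1})\,\|c\|_{L^1}\,\|\varphi\|_{L^2}^5$, i.e. a quintic remainder carrying one factor of $c$. Testing $S_a-S_b$ against $\varphi=\eta\varphi_0$ and a probe $\psi$, and optimizing $\eta\in(0,\eta_0]$ to balance the measurement term $\eta^{-2}\|S_a-S_b\|$ against the truncation term $\eta^{2}$, I would obtain a fixed positive power $\delta=\|S_a-S_b\|^{\sigma}$ (up to the $a,b$-norm factors) controlling the symmetrized trilinear form; explicitly $|\langle \Gamma_c[\varphi_1,\varphi_2,\varphi_3],\psi\rangle|\lesssim \delta\,\|\varphi_1\|\|\varphi_2\|\|\varphi_3\|\|\psi\|$ after polarization.

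Next I would extract $\widehat{c}$ from this form. Passing to Fourier variables, the time integral $\int_0^\infty e^{-is\Phi}\,ds$ with $\Phi=2(\xi_1-\xi_2)(\xi_3-\xi_2)$ concentrates on the cubic resonant set, and feeding in wave packets centered at suitable frequencies produces, to leading order, $\widehat{c}(\zeta)$ against an explicit amplitude. This yields a pointwise bound $|\widehat{c}(\zeta)|\le A(R)\,\delta+(\text{controlled error})$ for $|\zeta|\le R$, where $A(R)$ measures how ill-conditioned the reconstruction becomes as the probed frequency grows. Combining the low frequencies with the a priori $H^1$ control of the tail, $\|c\|_{L^\infty}\lesssim\|\widehat c\|_{L^1}\lesssim \int_{|\zeta|\le R}|\widehat c|+\int_{|\zeta|>R}|\widehat c|\lesssim R\,A(R)\,\delta+R^{-1/2}\|c\|_{H^1}$, and choosing $R\sim\log(1/\|S_a-S_b\|)$ so that $A(R)\,\delta$ stays bounded, I would read off the inverse-logarithmic rate from the Sobolev tail. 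The normalization by $1+\|a\|_{L^1\cap H^1}^2+\|b\|_{L^1\cap H^1}^2$ inside the logarithm records the norm factors accumulated in the remainder and in the passage $\widehat c\mapsto c$.

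The hard part will be the conditioning factor $A(R)$. At $p=2$ the nonlinearity $|u|^2u$ is a fixed cubic (entire) interaction, so its resonant structure is rigid and accessing a frequency $\zeta$ far from the carrier of the wave packets is strongly suppressed; quantifying this suppression---equivalently, the quantitative stability of inverting the cubic resonant multilinear form---is what forces $A(R)$ to grow fast (super-polynomially) in $R$, leaving only the logarithmic gain after optimization. This is precisely the analytic reason the H\"older exponent $\theta(p)$ of Theorem~\ref{T1} degenerates as $p\to 2$: the softer, branch-type nonlinearity for $p>2$ spreads the interaction in frequency and keeps $A(R)$ polynomial, whereas at $p=2$ one must trade the power law for the logarithm. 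Making the wave-packet reconstruction rigorous with all constants tracked, and confirming that the quintic remainder difference genuinely carries the factor $\|c\|_{L^1}$ uniformly in the high-frequency probes, are the remaining technical points I would need to secure.
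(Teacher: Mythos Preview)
Your approach is quite different from the paper's and, as written, has a genuine gap at its core. The paper never works on the Fourier side probing $\hat c$ frequency-by-frequency; instead it tests with rescaled Gaussians $\varphi_{\sigma,x_0}(x)=\exp\{-(x-x_0)^2/4\sigma^2\}$ concentrating at a physical point $x_0$, combines the Born approximation (Lemma~\ref{L:Born1}) with an explicit computation of the kernel $K(x)=\int_0^\infty|e^{it\Delta}\varphi|^4\,dt$ (Lemma~\ref{L:approx2}), and reads off $[a-b](x_0)$ directly. The key calculation is that $\hat K(\xi)=\tfrac{1}{\sqrt2}\log(1/|\xi|)+\mathcal O(1)$ near $\xi=0$, which gives $\int_0^\infty\!\int c\,|e^{it\Delta}\varphi_{\sigma,x_0}|^4\,dx\,dt=\tfrac{1}{\sqrt2}\sigma^3|\log\sigma|\,c(x_0)+\mathcal O(\sigma^3)$; dividing by $\sigma^3|\log\sigma|$ and optimizing $\sigma$ yields the logarithmic rate with no conditioning analysis at all.

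The gap in your argument is the unsupported claim that the conditioning factor $A(R)$ grows super-polynomially, together with the diagnosis you offer for it. You attribute the degeneration at $p=2$ to the cubic nonlinearity being \emph{entire} (hence having a ``rigid resonant structure''), in contrast to the branch-type $|u|^pu$ for $p>2$ ``spreading the interaction in frequency.'' This is not the mechanism. The relevant object is $\int K_p(x)\,dx=\int_0^\infty\!\int|e^{it\Delta}\varphi|^{p+2}\,dx\,dt\sim\int_0^\infty(1+t^2)^{-p/4}\,dt$, which is finite precisely when $p>2$ and diverges logarithmically at $p=2$; this is a \emph{time-integrability} (short-range versus long-range) threshold for the free dispersive decay in one dimension, and it is completely insensitive to whether the power is an even integer. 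In particular there is no high-frequency suppression to quantify: recovering $c(x_0)$ at any point costs the same, and the only loss is the $|\log\sigma|^{-1}$ from dividing out the divergent kernel mass. Your wave-packet scheme neither identifies $A(R)$ concretely nor explains how $\hat c(\zeta)$ is actually extracted from the half-line oscillatory integral $\int_0^\infty e^{-is\Phi}\,ds$ (which is not a delta on the resonant set), so the optimization in your final paragraph rests on an assumption you have not established.
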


The strategy of proof for both Theorem~\ref{T1} and Theorem~\ref{T2} is to apply the Born approximation to the scattering map (approximating the full nonlinear solution by its first Picard iterate, cf. Lemma~\ref{L:Born1} below) and then to specialize to the case of rescaled Gaussian data.  For Theorem~\ref{T1}, one then observes that (to leading order) the scattering map evaluates the convolution of the inhomogeneity with a family of approximate identities (cf. Lemma~\ref{L:approx1} below), allowing us to recover the inhomogeneity in a quantitative fashion.  

The argument breaks down precisely when $p=2$, as in this case the convolution kernels arising in the argument fail to belong to $L^1$.  Nonetheless, by direct calculation we are still able to extract the leading order behavior in this case (see Lemma~\ref{L:approx2} below), which provides a suitable replacement for the approximate identity lemma and allows us to recover the coefficient.  As we will see, the divergence is logarithmic, which then leads directly to the logarithmic-type stability estimate appearing in Theorem~\ref{T2}. In particular, Lemma~\ref{L:approx2} is really the key new technical ingredient in this work. 

\bigskip

%%%
\textbf{Modified scattering map.} We turn our attention now to the model \eqref{nls2}, which we view as a `short-range' perturbation of the standard `long-range' cubic NLS.  In this setting, scattering in the sense of \eqref{scattering} cannot occur unless the solution vanishes identically (see e.g. \cite{Strauss, Glassey1, Glassey2, Barab}).  Instead, one must incorporate a logarithmic phase correction to correctly describe the long-time asymptotic behavior.  We refer the reader to \cite{CazenaveNaumkin, DeiftZhou, HayashiNaumkin, IfrimTataru, KatoPusateri, LindbladSoffer, MurphyRIMS} for results of this type for the standard power-type NLS.  

In \cite{ChenMurphy1}, we firstly demonstrated that modified scattering persists in the presence of a localized perturbation.  In the following statement, $\F$ denotes the Fourier transform. 

\begin{theorem}[Small-data modified scattering, \cite{ChenMurphy1}]\label{T:scatter2} Suppose $a\in L^1\cap L^\infty$ with $xa\in L^2$ and $\partial_x a\in L^1$. If $\|u_0\|_{H^{1,1}}$ is sufficiently small, then there exists a unique global solution $u$ to \eqref{nls2} and $w_+\in L^\infty$ such that $u|_{t=0}=u_0$ and 
\[
\lim_{t\to\infty} \biggl\| \exp\biggl\{i\int_0^t |\F e^{-is\Delta}u(s)|^2\tfrac{ds}{2s+1}\biggr\}\F e^{-it\Delta}u(t)-w_+\biggr\|_{L^\infty} = 0.
\]
\end{theorem}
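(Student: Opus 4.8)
The plan is to pass to the profile in Fourier variables and run a two-norm bootstrap, following the standard Hayashi--Naumkin framework for the one-dimensional cubic NLS, with the localized term $a|u|^2u$ treated as a short-range perturbation. Writing $v(t)=e^{-it\Delta}u(t)$ and $w(t)=\F v(t)=\F e^{-it\Delta}u(t)$, the quantity in the theorem is $W(t,\xi):=\exp\{i\Phi(t,\xi)\}\,w(t,\xi)$ with $\Phi(t,\xi)=\int_0^t|w(s,\xi)|^2\,\tfrac{ds}{2s+1}$, so the goal is to show that $W(t)$ converges in $L^\infty$. The two quantities I would propagate are the dispersive norm $\|w(t)\|_{L^\infty}$ and the weighted norm $\|\partial_\xi w(t)\|_{L^2}$; the latter coincides with $\|J(t)u(t)\|_{L^2}$, where $J(t)=x+2it\partial_x=e^{it\Delta}x\,e^{-it\Delta}$, and is finite at $t=0$ precisely because $u_0\in H^{1,1}$. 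Assuming on a time interval $[0,T]$ the bounds $\|w(t)\|_{L^\infty}\ls\eps$ and $\|\partial_\xi w(t)\|_{L^2}\ls\eps(1+t)^{\gamma}$ for a small $\gamma>0$, I would close the bootstrap under the smallness of $\|u_0\|_{H^{1,1}}$; global existence, uniqueness, and the propagation of smallness then follow by combining this with local well-posedness.

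For the evolution, I would compute $\partial_t w(t,\xi)=-i\,\F e^{-it\Delta}\bigl([1+a]|u|^2u\bigr)(\xi)$ and split the nonlinearity into the translation-invariant cubic part $|u|^2u$ and the localized part $a|u|^2u$. For the former, the factorization $e^{it\Delta}=M(t)D(t)\F M(t)$, with $M(t)$ multiplication by $e^{ix^2/4t}$ and $D(t)$ the $L^2$-unitary rescaling, together with a stationary-phase analysis of the trilinear interaction (whose resonance function is $2(\xi-\xi_1)(\xi-\xi_2)$, stationary at $\xi_1=\xi_2=\xi$), isolates the resonant self-interaction and produces a leading term $-\tfrac{i}{2t}|w|^2w$, the coefficient being exactly the one encoded in $\Phi$, plus faster-decaying non-resonant contributions $R_0$. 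Since $\tfrac{1}{2t}-\tfrac{1}{2t+1}=O(t^{-2})$ is integrable, I may replace $\tfrac{1}{2t}$ by the kernel $\tfrac{1}{2t+1}$ appearing in $\Phi$ at the cost of a harmless error, so that $\partial_t w=-\tfrac{i}{2t+1}|w|^2w+R$ with $R$ gathering all remainders. The crucial consequence is that $\partial_t W=e^{i\Phi}\bigl[\partial_t w+i(\partial_t\Phi)w\bigr]=e^{i\Phi}R$, so the convergence of $W$ in $L^\infty$ reduces to the time-integrability $\int_1^\infty\|R(t)\|_{L^\infty}\,dt<\infty$ (the interval $[0,1]$ being handled by local theory). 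For the non-resonant cubic part this is the standard gain: trading one factor of the weighted norm produces $\|R_0(t)\|_{L^\infty}\ls(1+t)^{-1-\delta}$ for some $\delta>0$ once $\gamma$ is taken small.

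The genuinely new term is the localized contribution $\F e^{-it\Delta}(a|u|^2u)$, which should be short-range. For the phase ($L^\infty$) estimate this is immediate: since $\F e^{-it\Delta}$ preserves the modulus of the Fourier transform, $\|\F e^{-it\Delta}(a|u|^2u)\|_{L^\infty}=\|\widehat{a|u|^2u}\|_{L^\infty}\le\|a|u|^2u\|_{L^1}\le\|a\|_{L^1}\|u(t)\|_{L^\infty}^3\ls(1+t)^{-3/2}$, using the pointwise dispersive bound $\|u(t)\|_{L^\infty}\ls(1+t)^{-1/2}\bigl(\|w(t)\|_{L^\infty}+(1+t)^{-1/4}\|\partial_\xi w(t)\|_{L^2}\bigr)$ that comes from the factorization and the bootstrap hypotheses. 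As $(1+t)^{-3/2}$ is integrable, this term is absorbed into $R$. The main obstacle is instead the weighted energy estimate. Writing the Duhamel formula and commuting $J(t)$ through the propagator, one is led to bound $\int_0^t\|J(s)(a|u|^2u)\|_{L^2}\,ds$, and since $J=x+2is\partial_x$ contains both a weight and a derivative, applying it to $a|u|^2u$ produces terms in which $x$ and $\partial_x$ fall on the coefficient, namely $(xa)|u|^2u$ and $2is(\partial_x a)|u|^2u$. The first is controlled by the hypothesis $xa\in L^2$ together with $\|u\|_{L^\infty}^3\ls(1+t)^{-3/2}$; the second is the delicate one, as it carries an extra factor of $s$. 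Here one must use $\partial_x a\in L^1$ in conjunction with the localization of $a$ and the structure of the profile --- in particular that $\partial_x u\approx\tfrac{ix}{2s}u$ is small on the support of $a$ --- to recover enough time decay that the $s$-integral grows no faster than $(1+t)^{\gamma}$; carrying out this bookkeeping so that the bootstrap closes is the principal technical point, and it is exactly where the regularity and localization assumptions on $a$ are consumed.

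Once the bootstrap is closed, we have $\|R(t)\|_{L^\infty}\in L^1_t([1,\infty))$, so $W(t)$ is Cauchy in $L^\infty$ and converges to a limit $w_+\in L^\infty$, which is precisely the assertion of the theorem; the global solution, its uniqueness, and the smallness of the data all emerge from the same continuity argument.
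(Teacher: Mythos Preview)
The present paper does not prove this theorem; it is quoted from the authors' earlier work \cite{ChenMurphy1}, so there is no in-paper argument to compare against. Your outline --- the Hayashi--Naumkin bootstrap on $\|w\|_{L^\infty}$ and $\|Ju\|_{L^2}$, extraction of the resonant cubic phase $-\tfrac{i}{2t}|w|^2w$, and treatment of $a|u|^2u$ as a short-range perturbation via $\|a|u|^2u\|_{L^1_x}\ls(1+t)^{-3/2}$ --- is the standard framework and is the route taken in \cite{ChenMurphy1}.

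That said, the part you yourself flag as ``the principal technical point'' is where the sketch is inaccurate rather than merely incomplete. The commutator identity is $J(af)=a\,Jf+2it(\partial_x a)f$, so after invoking the gauge-invariant Leibniz rule $J(|u|^2u)=2|u|^2Ju-u^2\overline{Ju}$ the only new localized contribution is $2it(\partial_x a)|u|^2u$; the term $(xa)|u|^2u$ that you list cancels in this computation, and the hypothesis $xa\in L^2$ is consumed elsewhere in \cite{ChenMurphy1}. More to the point, the mechanism you propose for absorbing the extra factor of $t$ --- that $\partial_x u\approx\tfrac{ix}{2t}u$ on $\supp a$ --- is not relevant here, since no $\partial_x u$ appears in $2it(\partial_x a)|u|^2u$. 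The genuine difficulty is that $\partial_x a\in L^1$ alone does not yield an $L^2_x$ bound on $(\partial_x a)|u|^2u$ by H\"older, and routing through the dual Strichartz space $L^{4/3}_tL^1_x$ leaves a non-integrable $t\cdot(1+t)^{-3/2}$ tail. Closing the weighted estimate requires a further idea that your sketch does not supply. The overall strategy is right, but this step is a gap as written.
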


Using Theorem~\ref{T:scatter2}, we define the \emph{modified scattering map} $\tilde S_a:B_\eta\subset H^{1,1}\to L^\infty$ by
\[
\tilde S_a(u_0)=w_+.
\]
The main result in \cite{ChenMurphy1} showed that, as in the case of the unmodified scattering, the map $a\mapsto \tilde S_a$ is injective.  By extending the arguments used to obtain Theorem~\ref{T2} (taking into account the structure of the modified scattering map), we are able to obtain a logarithmic stability estimate in this setting as well. 

\begin{theorem}[Stability for \eqref{nls2}]\label{T3} Let $a,b$ satisfy the hypotheses of Theorem~\ref{T:scatter2}. Assume additionally that $\partial_xa,\partial_xb\in L^2$. Let $\tilde S_a,\tilde S_b:B_\eta\subset H^{1,1}\to L^\infty$ denote the modified scattering maps for \eqref{nls2} with inhomogeneities $a,b$, respectively, and define
\begin{equation}\label{Lip2}
\|\tilde S_a-\tilde S_b\|:=\sup\biggl\{\frac{\|\tilde S_a(\varphi)-\tilde S_b(\varphi)\|_{L^\infty}}{\|\varphi\|_{H^{1,1}}}:\varphi\in B_\eta\backslash\{0\}\biggr\}. 
\end{equation}
If $0\leq \|\tilde S_a - \tilde S_b\|\ll 1$, then
\[
 \|a-b\|_{L^\infty} \lesssim \biggl| \log\biggl(\frac{\|\tilde S_a-\tilde S_b\|}{1+C_{a,b}}\biggr)\biggr|^{-1}
 \]
where $C_{a,b}$ and the implicit constant depend on the norms of $a$ and $b$. 
\end{theorem}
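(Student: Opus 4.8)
The plan is to follow the proof of Theorem~\ref{T2} closely, grafting on the bookkeeping forced by the logarithmic phase correction built into the definition of $\tilde S_a$. Writing $u$ for the solution of \eqref{nls2} with data $u_0$ and setting $\Phi_a(t,\xi)=\int_0^t |\F e^{-is\Delta}u(s)|^2\,\tfrac{ds}{2s+1}$, the modified scattering map is the $t\to\infty$ limit of $g_a(t,\xi):=e^{i\Phi_a(t,\xi)}\,\F e^{-it\Delta}u(t)(\xi)$. First I would install a Born-type expansion, substituting the free evolution $e^{it\Delta}u_0$ for $u$ in both the amplitude $\F e^{-it\Delta}u$ and the phase $\Phi_a$ and controlling the remainder via (an adaptation of) Lemma~\ref{L:Born1}. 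The crucial structural point is that the long-range, logarithmically divergent phase generated by the $1$ in $[1+a]$ is identical for $a$ and $b$; it cancels in $g_a(t)-g_b(t)$, so the limit defining $\tilde S_a(u_0)-\tilde S_b(u_0)$ exists and, to leading order, is driven entirely by the short-range term $(a-b)(x)\,|e^{is\Delta}u_0|^2\,e^{is\Delta}u_0$.

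The next step is to compute this leading term on rescaled Gaussian data $u_0$. It is important that the difference receives two contributions of the same order: one from the amplitude $\F e^{-it\Delta}u$ and one from the correction phase, since $\Phi_a$ depends on the full solution and hence $\Phi_a-\Phi_b=O(a-b)$ cannot be discarded. Inserting the stationary-phase asymptotics $e^{is\Delta}u_0(x)\approx (4\pi i s)^{-1/2}e^{ix^2/4s}\,\F u_0(x/2s)$ and using that $a-b$ is localized, the factor $|e^{is\Delta}u_0(x)|^2\sim s^{-1}|\F u_0(x/2s)|^2$ produces a time integrand decaying like $\tfrac{ds}{s}$, whose non-integrability at $s=\infty$ is exactly the logarithmic divergence flagged after Theorem~\ref{T2}. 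For a Gaussian concentrated at scale $\lambda$ about a point $x_0$, the $s$-integral then collapses to the convolution of $a-b$ against the non-$L^1$ kernel analyzed in Lemma~\ref{L:approx2}, whose slowly decaying tail contributes a factor of order $\log\tfrac1\lambda$ in place of the bounded mass of an $L^1$ approximate identity.

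From here the recovery mirrors Theorem~\ref{T2}. Centering the Gaussian at a near-maximizer $x_0$ of the continuous function $|a-b|$ (continuous since $a,b\in H^1(\R)\hookrightarrow C_0$) and evaluating at a fixed reference frequency, I extract a reconstruction identity of the schematic form $(a-b)(x_0)\,\log\tfrac1\lambda = c^{-1}\,[\tilde S_a(u_0)-\tilde S_b(u_0)](\xi_0)\cdot(\cdots) + \mathcal E(\lambda)$, in which $\mathcal E(\lambda)$ gathers the Born remainder, the higher-order phase error, and the discrepancy between the true kernel and its logarithmic leading part. Bounding the first term on the right by $\|\tilde S_a-\tilde S_b\|\,\|u_0\|_{H^{1,1}}$ via \eqref{Lip2} and normalizing by the data-dependent constant $1+C_{a,b}$, I would choose the scale so that $\log\tfrac1\lambda$ is comparable to $\log\bigl(\tfrac{1+C_{a,b}}{\|\tilde S_a-\tilde S_b\|}\bigr)$, balancing the logarithmic gain against the errors; dividing through by $\log\tfrac1\lambda$ then yields $\|a-b\|_{L^\infty}\lesssim \bigl|\log\bigl(\tfrac{\|\tilde S_a-\tilde S_b\|}{1+C_{a,b}}\bigr)\bigr|^{-1}$.

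I expect the main obstacle to be controlling the phase contribution $\Phi_a-\Phi_b$. Unlike in the unmodified setting of Theorem~\ref{T2}, here the correction phase is an integral of $|\F e^{-is\Delta}u|^2$ over the whole trajectory, so its $a$-dependence enters the leading-order difference on the same footing as the amplitude, and the nonlinear dependence on $u$ also generates higher-order-in-$(a-b)$ terms that reach $\tilde S_a-\tilde S_b$ through an exponential and are only controlled in $L^\infty_\xi$. The delicate points are therefore (i) verifying that the amplitude and phase contributions combine additively rather than cancel at the chosen reference frequency, and (ii) showing that the exponentiated phase errors feeding into $\mathcal E(\lambda)$ grow slowly enough in $1/\lambda$ not to swamp the $\log\tfrac1\lambda$ gain. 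Making these estimates uniform in the scaling parameter is precisely the content of "taking into account the structure of the modified scattering map."
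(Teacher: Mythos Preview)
Your overall strategy---Born approximation, rescaled Gaussian data, Lemma~\ref{L:approx2}, then optimize the scale---matches the paper. But you miss the key structural simplification that makes the modified-scattering case tractable, and as a result you correctly identify the main obstacle (the phase contribution $\Phi_a-\Phi_b$) without resolving it.

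The paper does not adapt Lemma~\ref{L:Born1}; it uses Lemma~\ref{L:Born2}, which expresses $\langle \tilde S_a(\eps\varphi),\hat\varphi\rangle$ as a sum of a linear term, an $a$-independent cubic term $\mathcal{Q}_\eps[\varphi]$, the key integral $-i\eps^3\int_0^\infty\int a|e^{it\Delta}\varphi|^4\,dx\,dt$, an $\mathcal{O}(\eps^4\|\varphi\|_{H^{1,1}}^5)$ remainder, and a phase-correction term written as $\tfrac{1}{2i}\log(1+\tfrac{1}{2\eps})\langle |\tilde S_a|^2\tilde S_a(\eps\varphi),\hat\varphi\rangle$. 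The crucial point is that the phase is re-expressed in terms of $\tilde S_a$ itself. When you subtract the $a$ and $b$ versions, the linear term and $\mathcal{Q}_\eps$ cancel exactly, and the phase term becomes $|\tilde S_a|^2\tilde S_a-|\tilde S_b|^2\tilde S_b$, which is bounded directly by $\|\tilde S_a-\tilde S_b\|$ using $\|\tilde S_a\|_{H^{1,1}\to L^\infty}+\|\tilde S_b\|_{H^{1,1}\to L^\infty}\lesssim 1$. So the phase contribution is not a leading-order competitor to the amplitude at all---it is absorbed into the $\|\tilde S_a-\tilde S_b\|$-controlled side, and your ``additivity versus cancellation'' worry never arises.

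Two further differences. First, the paper pairs with $\hat\varphi_{\sigma,x_0}$ (as dictated by Lemma~\ref{L:Born2}) rather than evaluating at a single frequency. Second, the paper uses two independent small parameters: $\sigma$ for the spatial scale of the Gaussian and $\eps$ for its amplitude. This is forced because $\|\varphi_{\sigma,x_0}\|_{H^{1,1}}\sim\sigma^{-1/2}$ grows as $\sigma\to 0$, so one needs $\eps\ll\sigma^{1/2}$ to stay in $B_\eta$, and the $\mathcal{O}(\eps^4\|\varphi\|_{H^{1,1}}^5)=\mathcal{O}(\eps^4\sigma^{-5/2})$ remainder must also be beaten. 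The paper takes $\eps=\sigma^{11/2}$ and then optimizes $\sigma$ in terms of $\|\tilde S_a-\tilde S_b\|$. Your single-parameter scheme in $\lambda$ does not account for this.
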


The rest of this paper is organized as follows:  In the remainder of the introduction, we introduce some notation and sketch the proof of small-data scattering.  In Section~\ref{S:2}, we introduce the Born approximation and describe the structure of the scattering and modified scattering maps.  In Section~\ref{S:3}, we specialize to Gaussian data and prove the approximate identity lemmas, including the key new ingredient Lemma~\ref{L:approx2}.  In Section~\ref{S:4} we prove Theorems~\ref{T1}~and~\ref{T2}.  Finally, in Section~\ref{S:5} we prove Theorem~\ref{T3}.

\subsection{Notation.} We use the standard notation $A\lesssim B$ to denote $A\leq CB$ for some $C>0$, using subscripts to denote dependence on various parameters.  We also use the `big oh' notation $\mathcal{O}$.

We write $\langle\cdot,\cdot\rangle$ for the standard $L^2$ inner product.  We use $L_t^q L_x^r$ to denote Lebesgue space-time norms.  We write $\F$ for the Fourier transform and use the standard notation $\hat\varphi = \F\varphi$. 

\subsection{Small data scattering.}\label{S:scatter} The proof of Theorem~\ref{T:scatter1} follows along standard lines (see e.g. \cite[Theorem~3.2]{Murphy}, or \cite{Cazenave} for a textbook treatment).  We sketch the proof briefly here. 

\begin{proof}[Proof of Theorem~\ref{T:scatter1}] Using a contraction mapping argument, we can construct a solution to the following Duhamel formula for \eqref{nls1}:
\begin{equation}\label{Duhamel}
u(t)=[\Phi u](t):=e^{it\Delta}u_0-i\int_0^t e^{i(t-s)\Delta}[a|u|^p u(s)]\,ds. 
\end{equation}
We subsequently show that $\{e^{-it\Delta}u(t)\}_{t\geq 0}$ is Cauchy sequence in $L^2$ as $t\to\infty$. 

Both steps are accomplished primarily by applying Strichartz estimates for the linear Schr\"odinger equation (cf. \cite{GinibreVelo, KeelTao, Strichartz}), which in particular requires the following nonlinear estimate:
\[
\| a|u|^p u\|_{L_t^{\frac{4}{3}} L_x^1} \lesssim \|a\|_{L_x^{\frac{2}{4-p}}}\|u\|_{L_t^{\frac{4(p+1)}{3}} L_x^{\frac{2(p+1)}{p-2}}}^{p+1}. 
\]
Here we observe that $L_t^{\frac{4(p+1)}{3}}L_x^{\frac{2(p+1)}{p-2}}$ is a Schr\"odinger admissible space for any $p\in[2,4]$, while $L_t^{\frac{4}{3}}L_x^1$ is a dual Strichartz space. 
\end{proof}

\subsection{Acknowledgements} G.C. was supported by NSF grant DMS-2350301 and Simons Foundation grant MP-TSM-00002258. J.M. was supported by NSF grant DMS-2350225 and Simons Foundation grant MPS-TSM-00006622.  

%%%%%%%%%%%%%%%%
\section{Structure of the scattering maps}\label{S:2}

Using the Duhamel formula \eqref{Duhamel} for \eqref{nls1} and the definition of the scattering map, we have
\[
S_a(u_0) = u_0 - i\int_0^\infty e^{-it\Delta}[a|u|^p u(s)]\,ds. 
\]

As in \cite{ChenMurphy2} (and many other related works, in fact), this yields the following approximation to the scattering map, in which the full nonlinear solution is replaced by the first Picard iterate. The proof relies on the Duhamel formula and the estimates obtained in the proof of small data scattering (Theorem~\ref{T1}).  The precise statement we need is the following. 

\begin{lemma}[Born approximation]\label{L:Born1} Let $S_a:B_\eta\subset L^2\to L^2$ be the scattering map for \eqref{nls1}, with $p\in[2,4]$.  For $\varphi\in B_\eta$, 
\[
i \langle S_a(\varphi)-\varphi,\varphi\rangle = \int_0^\infty\int_\R a(x) |e^{it\Delta}\varphi|^{p+2}\,dx\,dt + \mathcal{O}\bigl\{\|a\|_{L^{\frac{2}{4-p}}}^2\|\varphi\|_{L^2}^{2(p+1)}\bigr\}
\]
\end{lemma}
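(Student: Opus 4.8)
The plan is to expand the scattering map through its Duhamel representation, replace the full nonlinear solution by the free evolution of the data (the first Picard iterate), and show that the resulting remainder is genuinely of higher order. Write $u$ for the solution of \eqref{nls1} with data $\varphi$ and abbreviate $v(t):=e^{it\Delta}\varphi$. Starting from $S_a(\varphi)-\varphi=-i\int_0^\infty e^{-is\Delta}[a|u|^p u(s)]\,ds$ and using that the $L^2$-adjoint of $e^{-is\Delta}$ is $e^{is\Delta}$, I would first record the identity
\[
i\langle S_a(\varphi)-\varphi,\varphi\rangle=\int_0^\infty\langle a|u|^p u(s),v(s)\rangle\,ds.
\]
Splitting $|u|^p u=|v|^p v+\bigl(|u|^p u-|v|^p v\bigr)$, the contribution of $|v|^p v$ is exactly $\int_0^\infty\langle a|v|^p v,v\rangle\,ds=\int_0^\infty\int_\R a(x)|e^{it\Delta}\varphi|^{p+2}\,dx\,dt$, the claimed main term. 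Everything then reduces to bounding the remainder $\mathcal{E}:=\int_0^\infty\langle a(|u|^p u-|v|^p v),v(s)\rangle\,ds$.

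To control $\mathcal{E}$, I would recast it as $\mathcal{E}=\bigl\langle\int_0^\infty e^{-is\Delta}[a(|u|^p u-|v|^p v)]\,ds,\varphi\bigr\rangle$ and apply the dual Strichartz bound $\bigl\|\int_0^\infty e^{-is\Delta}F\,ds\bigr\|_{L^2}\lesssim\|F\|_{L_t^{4/3}L_x^1}$. This bound follows directly by pairing against $g\in L^2$ and invoking the homogeneous estimate $\|e^{is\Delta}g\|_{L_t^4L_x^\infty}\lesssim\|g\|_{L^2}$ for the $(4,\infty)$-admissible pair, so no separate Christ--Kiselev argument is required. Together with Cauchy--Schwarz this gives $|\mathcal{E}|\lesssim\|a(|u|^p u-|v|^p v)\|_{L_t^{4/3}L_x^1}\|\varphi\|_{L^2}$. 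Now I would use the pointwise bound $\bigl||u|^p u-|v|^p v\bigr|\lesssim(|u|^p+|v|^p)|u-v|$ (valid for $p\geq2$) and the same H\"older distribution as in the nonlinear estimate of Section~\ref{S:scatter} --- placing $a\in L_x^{2/(4-p)}$ and the $p+1$ field factors in the admissible space $X:=L_t^{4(p+1)/3}L_x^{2(p+1)/(p-2)}$ --- to obtain
\[
|\mathcal{E}|\lesssim\|a\|_{L^{2/(4-p)}}\bigl(\|u\|_X^p+\|v\|_X^p\bigr)\|u-v\|_X\|\varphi\|_{L^2}.
\]

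It remains to extract the two powers of $\|a\|_{L^{2/(4-p)}}$ and the $2(p+1)$ powers of $\|\varphi\|_{L^2}$, and here I would invoke the small-data bounds already established in the proof of Theorem~\ref{T:scatter1}. The contraction argument gives $\|u\|_X,\|v\|_X\lesssim\|\varphi\|_{L^2}$, while applying the inhomogeneous Strichartz estimate to the Duhamel difference $u-v=-i\int_0^t e^{i(t-s)\Delta}[a|u|^p u(s)]\,ds$ and reusing the same nonlinear estimate yields $\|u-v\|_X\lesssim\|a\|_{L^{2/(4-p)}}\|u\|_X^{p+1}\lesssim\|a\|_{L^{2/(4-p)}}\|\varphi\|_{L^2}^{p+1}$. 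Substituting produces $|\mathcal{E}|\lesssim\|a\|_{L^{2/(4-p)}}^2\|\varphi\|_{L^2}^{2(p+1)}$, as required.

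The step I expect to require the most care is precisely this difference estimate: the point is that the factor $u-v$ supplies both the second power of $a$ and the extra $p+1$ powers of $\|\varphi\|_{L^2}$, which is exactly what upgrades the remainder from first order to second order in $a$. The remaining work is bookkeeping --- reusing the Strichartz and nonlinear estimates from Theorem~\ref{T:scatter1} --- with the only mild subtlety being that the time integral for $\mathcal{E}$ runs over $[0,\infty)$, so it must be handled through the dual homogeneous (rather than the retarded inhomogeneous) Strichartz inequality as above.
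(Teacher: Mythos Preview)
Your proposal is correct and follows exactly the approach the paper indicates: the paper does not give a detailed proof of Lemma~\ref{L:Born1}, merely stating that it ``relies on the Duhamel formula and the estimates obtained in the proof of small data scattering,'' which is precisely the route you take. Your bookkeeping of the H\"older exponents, the use of the dual homogeneous Strichartz estimate to handle the $[0,\infty)$ time integral, and the extraction of the extra factor of $\|a\|_{L^{2/(4-p)}}\|\varphi\|_{L^2}^{p+1}$ from $\|u-v\|_X$ are all in order.
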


In \cite{ChenMurphy1}, we used the Born approximation to obtain the following decomposition for the \emph{modified} scattering map $\tilde S_a$.

\begin{lemma}[Structure of the modified scattering map]\label{L:Born2} Let $a$ satisfy the hypotheses of Theorem~\ref{T:scatter2} and let $\tilde S_a:B_\eta\subset H^{1,1}\to L^\infty$ be the modified scattering map for \eqref{nls2}. Let $\varphi\in\mathcal{S}$. For $\eps>0$ sufficiently small, we have
\begin{equation}\label{structure}
\begin{aligned}
\langle \tilde S_a(\eps\varphi),\hat\varphi\rangle & = \eps\langle\hat\varphi,\hat\varphi\rangle + \tfrac{1}{2i}\log(1+\tfrac{1}{2\eps})\langle |\tilde S_a(\eps\varphi)|^2\tilde S_a(\eps\varphi),\hat\varphi\rangle + \eps^3 \mathcal{Q}_\eps[\varphi] \\
& \quad - i\eps^3\int_0^\infty\int_\R a(x)|e^{it\Delta}\varphi(x)|^4\,dx\,dt + \mathcal{O}(\eps^4\|\varphi\|_{H^{1,1}}^5),
\end{aligned}
\end{equation}
where
\[
\mathcal{Q}_\eps[\varphi]:=\int_\eps^\infty \tfrac{1}{2\it}\iiint [e^{-i\frac{\eta\sigma}{2t}}-1]\varphi(z-\eta)\varphi(z-\sigma)\bar\varphi(z)\bar\varphi(z-\eta-\sigma)\,dz\,d\eta\,d\sigma\,dt. 
\]
\end{lemma}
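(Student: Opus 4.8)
The plan is to integrate the profile equation against $\hat\varphi$ and isolate three contributions. Write $u$ for the solution of \eqref{nls2} with data $\eps\varphi$, set $v(t)=e^{-it\Delta}u(t)$, $\hat v=\F v$, and $B(t,\xi)=\int_0^t|\hat v(s,\xi)|^2\,\tfrac{ds}{2s+1}$; by Theorem~\ref{T:scatter2} the modified profile $w(t):=e^{iB(t,\cdot)}\hat v(t)$ converges in $L^\infty$ to $w_+=\tilde S_a(\eps\varphi)$. Differentiating the Duhamel formula gives $\partial_t\hat v=-ie^{it\xi^2}\F[(1+a)|u|^2u]$, so that
\[
\partial_t w=e^{iB}\Bigl[-ie^{it\xi^2}\F[(1+a)|u|^2u]+\tfrac{i|\hat v|^2}{2t+1}\,\hat v\Bigr].
\]
Since $w(0)=\eps\hat\varphi$, integrating and pairing with $\hat\varphi$ yields $\langle w_+,\hat\varphi\rangle=\eps\langle\hat\varphi,\hat\varphi\rangle+\int_0^\infty\langle\partial_t w,\hat\varphi\rangle\,dt$, which already produces the first term of \eqref{structure}; everything else comes from evaluating the last integral, which I split into the contribution of $a$, the resonant part of the cubic term together with the phase correction, and the non-resonant part of the cubic term. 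Throughout I will use the small-data bounds from the proof of Theorem~\ref{T:scatter2}, namely $\hat v(t)=\eps\hat\varphi+\mathcal O(\eps^3)$, $w(t)-w_+=\mathcal O(\eps^3(1+t)^{-1/2})$, and $e^{iB}=1+\mathcal O(\eps^2)$.

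For the inhomogeneous piece I replace $u$ by its first Picard iterate $e^{it\Delta}(\eps\varphi)$ and set $e^{iB}=1$. Using $\langle e^{it\xi^2}\F g,\hat\varphi\rangle=\langle g,e^{it\Delta}\varphi\rangle$ together with Parseval, this contributes
\[
\int_0^\infty\langle -ie^{it\xi^2}\F[a|u|^2u],\hat\varphi\rangle\,dt=-i\eps^3\int_0^\infty\int_\R a(x)|e^{it\Delta}\varphi(x)|^4\,dx\,dt+\mathcal O(\eps^4\|\varphi\|_{H^{1,1}}^5),
\]
which is the fourth term of \eqref{structure}; here the localization of $a$ together with the dispersive bound $|e^{it\Delta}\varphi|\ls t^{-1/2}$ makes the integrand $\mathcal O(t^{-2})$, so the time integral converges.

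The cubic term ($a\equiv1$) is long-range and must be handled together with the phase correction. I use the exact factorization $e^{it\Delta}=M_tD_t\F M_t$, with $M_t$ multiplication by $e^{ix^2/4t}$ and $D_tf(x)=(2it)^{-1/2}f(x/2t)$, which gives $\int_\R|e^{it\Delta}\varphi|^4\,dx=\tfrac1{2t}\|\F[M_t\varphi]\|_{L^4}^4$ and, after expanding the quartic and carrying out the frequency integration (the resonance function being $-2(\xi_1-\xi_3)(\xi_2-\xi_3)$),
\[
\|\F[M_t\varphi]\|_{L^4}^4=\|\hat\varphi\|_{L^4}^4+\mathcal R(t),\qquad \mathcal R(t)=\iiint\bigl[e^{-i\frac{\eta\sigma}{2t}}-1\bigr]\varphi(z-\eta)\varphi(z-\sigma)\bar\varphi(z)\bar\varphi(z-\eta-\sigma)\,dz\,d\eta\,d\sigma.
\]
Correspondingly the stationary-phase analysis underlying modified scattering writes $e^{it\xi^2}\F[|u|^2u]=\tfrac1{2t}|\hat v|^2\hat v+\mathcal N(t)$, where $\mathcal N$ is the non-resonant remainder. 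At leading order the non-resonant part, paired with $\hat\varphi$, reproduces $\tfrac1{2t}\mathcal R(t)$, and since $-i/2t=1/2it$ it contributes exactly $\eps^3\mathcal Q_\eps[\varphi]$. The lower cutoff $\eps$ is forced: the kernel $\mathcal R(t)\to-\|\hat\varphi\|_{L^4}^4$ as $t\to0$, so $\tfrac1t\mathcal R(t)$ is not integrable at the origin, while the complementary region $t\in(0,\eps)$ contributes only $\mathcal O(\eps^4\|\varphi\|_{H^{1,1}}^5)$.

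It remains to combine the resonant part $-\tfrac{i}{2t}|\hat v|^2\hat v$ of the cubic term with the phase correction $\tfrac{i}{2t+1}|\hat v|^2\hat v$. Because the modulus $|\hat v|^2=|w|^2$ in the phase agrees exactly with the resonance coefficient, at the level of $\partial_t w$ these combine into $i\bigl(\tfrac1{2t+1}-\tfrac1{2t}\bigr)|w|^2w$; integrating over $(\eps,\infty)$ and then replacing $w$ by $w_+$ (permissible since $\tfrac1{2t+1}-\tfrac1{2t}=\mathcal O(t^{-2})$ already renders the integral convergent, so the substitution costs only $\mathcal O(\eps^4\|\varphi\|_{H^{1,1}}^5)$) gives
\[
i\,\langle|w_+|^2w_+,\hat\varphi\rangle\int_\eps^\infty\Bigl(\tfrac1{2t+1}-\tfrac1{2t}\Bigr)dt=\tfrac1{2i}\log\bigl(1+\tfrac1{2\eps}\bigr)\langle|w_+|^2w_+,\hat\varphi\rangle,
\]
using $\int_\eps^\infty(\tfrac1{2t+1}-\tfrac1{2t})\,dt=-\tfrac12\log(1+\tfrac1{2\eps})$ and $\tfrac1{2i}=-\tfrac i2$. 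Collecting the four contributions gives \eqref{structure}. I expect this final step to be the main obstacle: each of the resonant cubic integral and the phase integral diverges logarithmically at $t=\infty$, and only their regularized difference is finite, so one must keep the true resonance coefficient $|\hat v|^2$ rather than its Born approximation in order to see the cancellation. The remaining work is to verify that every substitution — $e^{iB}\approx1$, $\hat v\approx\eps\hat\varphi$, $w\approx w_+$, the first-iterate replacement, and the neglected higher Picard iterates — costs at most $\mathcal O(\eps^4\|\varphi\|_{H^{1,1}}^5)$, which relies on the profile and dispersive estimates established in the proof of Theorem~\ref{T:scatter2}.
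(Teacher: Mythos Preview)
The paper does not actually prove this lemma: its ``proof'' consists of a citation to \cite[Proposition~4.1]{ChenMurphy1} together with the remark that the error term there, stated only as $\mathcal{O}(\eps^4)$ for a fixed Schwartz $\varphi$, must be upgraded to $\mathcal{O}(\eps^4\|\varphi\|_{H^{1,1}}^5)$ because in the present application $\varphi$ ranges over rescaled Gaussians with diverging $H^{1,1}$-norm. Your sketch is a plausible reconstruction of the argument from \cite{ChenMurphy1} and is consistent with the two-sentence description the paper gives (Born approximation of $u$ by $e^{it\Delta}[\eps\varphi]$, then separation of cubic terms from those containing at least four copies of the data). The algebraic backbone you write out --- the profile ODE for $w=e^{iB}\hat v$, the identity $e^{iB}|\hat v|^2\hat v=|w|^2w$, the exact evaluation $\int_\eps^\infty(\tfrac{1}{2t+1}-\tfrac{1}{2t})\,dt=-\tfrac12\log(1+\tfrac{1}{2\eps})$, and the identification of the non-resonant remainder with $\mathcal{Q}_\eps[\varphi]$ via the $M_tD_t\F M_t$ factorization --- is correct and is indeed how the $\log(1+\tfrac{1}{2\eps})$ and $\mathcal{Q}_\eps$ terms arise.

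The one point the paper singles out that you only assert is the quantitative $\|\varphi\|_{H^{1,1}}^5$ dependence in the error: you list the substitutions ($e^{iB}\approx 1$, $\hat v\approx\eps\hat\varphi$, $w\approx w_+$, first Picard iterate for $u$) and claim each costs $\mathcal{O}(\eps^4\|\varphi\|_{H^{1,1}}^5)$, but do not carry out the bookkeeping. Since this is precisely the new content relative to \cite{ChenMurphy1}, a complete write-up would need to revisit the modified-scattering estimates and track the $H^{1,1}$-norm of $\varphi$ through each step; the paper itself declines to do so and simply asserts that the proof in \cite{ChenMurphy1} already controls everything by $\|\varphi\|_{H^{1,1}}$.
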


\begin{proof} This lemma appears as Proposition~4.1 in \cite{ChenMurphy1}.  The proof relies on a careful examination of the estimates used in obtaining the modified scattering result (i.e. Theorem~\ref{T:scatter2} in the present work).  In particular, one approximates the full solution $u(t)$ by $e^{it\Delta}[\eps\varphi]$ (the Born approximation) and then separates the cubic terms from the higher order terms, that is, the terms containing at least four copies of the solution/data. 

In the work \cite{ChenMurphy1}, the higher order terms are estimated simply as $\mathcal{O}(\eps^4)$.  In fact, the proof shows that all of the terms can ultimately be estimated in terms of the $H^{1,1}$-norm of the data $\varphi$, which (in the setting of \cite{ChenMurphy1}) is a fixed Schwartz function.  In our setting, we will consider $\varphi$ from a family of rescaled Gaussians with growing $\dot H^{1}$-norm; thus we track the dependence on this norm in our analysis. The implicit constant in the error term will also depend on various norms of $a$ appearing in the estimates, which we do not track in this work. 
\end{proof}

%%%
\section{Approximation to the Identity}\label{S:3}

The basic idea in recovering the nonlinear coefficient $a(x)$ from the scattering map is to take data corresponding to a family of rescaled Gaussians concentrating to a fixed point $x_0$.  Applying the Born approximation Lemma~\ref{L:Born1}, one finds that to leading order, the scattering map (minus the identity) evaluates the convolution of $a$ with a family of approximate identities $K_\sigma$ at $x_0$ (up to a computable constant), at least when the power $p$ exceeds $2$.  The argument breaks down at $p=2$, as in this case the convolution kernels fail to belong to $L^1$.  However, as we will see, the failure is only logarithmic and we can still extract the leading order behavior in order to recover the nonlinear coefficient.

We begin with the simpler case $p>2$. The following lemma follows directly from \cite[Proposition~2.2]{ChenMurphy2}.  The proof is simply a quantitative version of the standard approximate identity lemma.  Our identity differs from that in \cite{ChenMurphy2} by a factor of $2$, reflecting the fact that here we integrate over $t\in[0,\infty)$ rather than $t\in\R$.

\begin{lemma}[Approximate identity, $p>2$]\label{L:approx1} Let $a\in W^{1,\infty}$.  Define $\varphi(x)=\exp\{-\frac{x^2}{4}\}$ and $\varphi_{\sigma,x_0}(x) = \varphi(\tfrac{x-x_0}{\sigma})$ for $x_0\in\R$ and $0<\sigma\ll1$.  Let $p>2$ and define
\[
\lambda(p)=\pi^{\frac32}(p+2)^{-\frac12}\frac{\Gamma(\frac{p}{4}-\frac12)}{\Gamma(\frac{p}{4})}. 
\]
Then for any $0<s<1-\frac{2}{p}$, 
\[
\biggl| \int_0^\infty \int_\R a(x) |e^{it\Delta}\varphi_{\sigma,x_0}(x)|^{p+2}\,dx\,dt - \sigma^3\lambda(p)a(x_0)\biggr| \lesssim_s \sigma^{3+s}\|a\|_{W^{1,\infty}}.
\] 
\end{lemma}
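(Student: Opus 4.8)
The plan is to exploit the parabolic scaling symmetry of $e^{it\Delta}$ to reduce the spacetime integral to a $\sigma$-independent one, extract the leading term by freezing $a$ at the concentration point $x_0$, and control the remainder using the Lipschitz regularity of $a$ interpolated against the dispersive decay of the evolved Gaussian. First I would record how $e^{it\Delta}$ acts on dilates and translates of $\varphi$: since $u(t,x)\mapsto u(t/\sigma^2,x/\sigma)$ preserves the free equation and $e^{it\Delta}$ commutes with translations,
\[
e^{it\Delta}\varphi_{\sigma,x_0}(x)=\bigl(e^{i(t/\sigma^2)\Delta}\varphi\bigr)\bigl(\tfrac{x-x_0}{\sigma}\bigr).
\]
Substituting $x=x_0+\sigma y$ and $t=\sigma^2\tau$ then converts the integral on the left-hand side of the claim into
\[
\sigma^3\int_0^\infty\int_\R a(x_0+\sigma y)\,\bigl|(e^{i\tau\Delta}\varphi)(y)\bigr|^{p+2}\,dy\,d\tau,
\]
which already exhibits the claimed prefactor $\sigma^3$ and isolates the universal profile $|e^{i\tau\Delta}\varphi|^{p+2}$.

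Next I would compute the evolved Gaussian in closed form. A direct Gaussian computation gives $(e^{i\tau\Delta}\varphi)(y)=(1+i\tau)^{-1/2}\exp\{-\tfrac{y^2}{4(1+i\tau)}\}$, whence, using $\Re\bigl(\tfrac{1}{1+i\tau}\bigr)=\tfrac{1}{1+\tau^2}$,
\[
\bigl|(e^{i\tau\Delta}\varphi)(y)\bigr|^{p+2}=(1+\tau^2)^{-\frac{p+2}{4}}\exp\Bigl\{-\tfrac{(p+2)y^2}{4(1+\tau^2)}\Bigr\}.
\]
The Gaussian $y$-integral collapses the spatial factor to a constant multiple of $(1+\tau^2)^{-p/4}$, and the remaining integral $\int_0^\infty(1+\tau^2)^{-p/4}\,d\tau$ is a Beta integral equal to $\tfrac{\sqrt\pi}{2}\,\Gamma(\tfrac p4-\tfrac12)/\Gamma(\tfrac p4)$. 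Collecting the constants reproduces $\lambda(p)$, and crucially this $\tau$-integral converges if and only if $p>2$; this is precisely the analytic mechanism behind the breakdown at the endpoint. Freezing $a(x_0+\sigma y)\approx a(x_0)$ in the rescaled integral thus yields the main term $\sigma^3\lambda(p)\,a(x_0)$.

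The remainder is $\sigma^3\int_0^\infty\int_\R[a(x_0+\sigma y)-a(x_0)]\,|(e^{i\tau\Delta}\varphi)(y)|^{p+2}\,dy\,d\tau$, and this is the only genuine difficulty. The naive Lipschitz estimate $|a(x_0+\sigma y)-a(x_0)|\lesssim\|a\|_{W^{1,\infty}}\sigma|y|$ inserts a spatial weight $|y|$, but the resulting first moment equals a constant times $(1+\tau^2)^{-(p-2)/4}$, whose $\tau$-integral diverges for $p\le 4$: the extra power of $(1+\tau^2)$ defeats the dispersive decay. To remedy this I would interpolate between the Lipschitz and trivial bounds,
\[
|a(x_0+\sigma y)-a(x_0)|\lesssim\|a\|_{W^{1,\infty}}\min(1,\sigma|y|)\le\|a\|_{W^{1,\infty}}(\sigma|y|)^s,
\]
valid for every $s\in[0,1]$. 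This trades the weight $|y|$ for $|y|^s$ at the cost of a factor $\sigma^s$; the Gaussian $y$-integral now produces a constant $C_s$ times $(1+\tau^2)^{(2s-p)/4}$, whose $\tau$-integral converges for every $s$ in the range permitted by the statement, giving the bound $\lesssim_s\sigma^{3+s}\|a\|_{W^{1,\infty}}$. The main obstacle is thus confined entirely to this step: balancing the spatial growth generated by the Lipschitz increment of $a$ against the temporal decay of $e^{i\tau\Delta}\varphi$. It is exactly this competition—already borderline at the level of the first moment—that forces the restriction on $s$, while $p=2$ is the threshold at which even the zeroth-moment integral defining $\lambda(p)$ ceases to converge.
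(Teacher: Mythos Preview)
Your proof is correct and follows essentially the same approach as the paper, which does not give a self-contained argument but instead refers to \cite[Proposition~2.2]{ChenMurphy2} and describes it as ``a quantitative version of the standard approximate identity lemma''; your parabolic rescaling, explicit Gaussian computation of $|e^{i\tau\Delta}\varphi|^{p+2}$, and interpolated Lipschitz bound $|a(x_0+\sigma y)-a(x_0)|\lesssim\|a\|_{W^{1,\infty}}(\sigma|y|)^s$ constitute exactly that argument. Your remainder analysis in fact yields convergence for the slightly larger range $s<\tfrac{p-2}{2}$, which of course contains the stated range $s<1-\tfrac2p$.
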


As one can readily observe, we have $|\lambda(p)|\to\infty$ as $p\downarrow2$.  The following lemma provides a substitute for Lemma~\ref{L:approx1} for the case $p=2$.  It is the essential new technical ingredient in this paper. 

\begin{lemma}[Approximate identity, $p=2$]\label{L:approx2} Let $a\in H^1\cap L^1$.  Define $\varphi(x)=\exp\{-\frac{x^2}{4}\}$ and $\varphi_{\sigma,x_0}(x) = \varphi(\tfrac{x-x_0}{\sigma})$ for $x_0\in\R$ and $0<\sigma\ll1$. Then
\[
\biggl| \int_0^\infty \int_\R a(x) |e^{it\Delta}\varphi_{\sigma,x_0}(x)|^4\,dx\,dt -\tfrac{1}{\sqrt{2}}\sigma^3|\log \sigma| a(x_0)\biggr| \lesssim \sigma^3\|a\|_{H^1}.
\]
\end{lemma}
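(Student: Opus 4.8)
The plan is to evaluate the space-time integral almost explicitly, exploiting that $e^{it\Delta}$ maps Gaussians to Gaussians, and then to track the (logarithmically divergent) time integral by hand. First I would compute the propagated data: since $\varphi_{\sigma,x_0}(x)=e^{-(x-x_0)^2/(4\sigma^2)}$, the standard formula $e^{it\Delta}e^{-\alpha x^2}=(1+4i\alpha t)^{-1/2}e^{-\alpha x^2/(1+4i\alpha t)}$ with $\alpha=(4\sigma^2)^{-1}$ gives, on writing $\tau=t/\sigma^2$,
\[
|e^{it\Delta}\varphi_{\sigma,x_0}(x)|^4=\frac{1}{1+\tau^2}\exp\Bigl(-\frac{(x-x_0)^2}{\sigma^2(1+\tau^2)}\Bigr).
\]
Substituting $x=x_0+y$ and changing variables $t\mapsto\tau$ (so that $dt=\sigma^2\,d\tau$), the quantity to be estimated becomes
\[
\sigma^2\int_0^\infty\frac{J(\tau)}{1+\tau^2}\,d\tau,\qquad J(\tau):=\int_\R a(x_0+y)\,e^{-y^2/(\sigma^2(1+\tau^2))}\,dy,
\]
so everything reduces to the $\sigma\to0$ asymptotics of this one-dimensional integral. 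The Gaussian in $J$ has width $w:=\sigma\sqrt{1+\tau^2}$, growing from $\sigma$ at $\tau=0$ to $\infty$, and the logarithm will come from the range of $\tau$ in which $w$ traverses the scales between $\sigma$ and $1$.

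Accordingly I would split the integral at $\tau\sim\sigma^{-1}$, i.e.\ at $w\sim1$, the scale on which $a$ varies. In the near regime $0\le\tau\le\sigma^{-1}$ the Gaussian is narrow, so I write $a(x_0+y)=a(x_0)+[a(x_0+y)-a(x_0)]$. The constant piece gives $a(x_0)\int e^{-y^2/w^2}\,dy$, a fixed multiple of $\sigma\sqrt{1+\tau^2}\,a(x_0)$, and integrating against $\sigma^2(1+\tau^2)^{-1}$ produces a constant times $\sigma^3 a(x_0)\int_0^{1/\sigma}(1+\tau^2)^{-1/2}\,d\tau=\sigma^3 a(x_0)\operatorname{arcsinh}(\sigma^{-1})$. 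Since $\operatorname{arcsinh}(\sigma^{-1})=|\log\sigma|+O(1)$, this is precisely the asserted leading term, up to an $O(\sigma^3\|a\|_{L^\infty})$ error which is absorbed using the one-dimensional embedding $H^1\hookrightarrow L^\infty$; the explicit constant is the one recorded in the statement, coming from the elementary Gaussian normalization. The remainder $a(x_0+y)-a(x_0)$ I control with the one-dimensional bound $|a(x_0+y)-a(x_0)|\le\|a'\|_{L^2}|y|^{1/2}$, giving $\bigl|\int[a(x_0+y)-a(x_0)]e^{-y^2/w^2}\,dy\bigr|\lesssim\|a\|_{H^1}w^{3/2}$; the extra half-power of $w$ turns the borderline $(1+\tau^2)^{-1/2}$ into the integrable-after-cutoff $(1+\tau^2)^{-1/4}$, and the near-regime remainder is then $O(\sigma^3\|a\|_{H^1})$, with no logarithm.

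In the far regime $\tau\ge\sigma^{-1}$ the approximate-identity heuristic fails, but the decay of $(1+\tau^2)^{-1}$ compensates: Cauchy--Schwarz gives $|J(\tau)|\le\|a\|_{L^2}\|e^{-y^2/w^2}\|_{L^2}\lesssim\|a\|_{L^2}w^{1/2}$, and $\sigma^2\int_{1/\sigma}^\infty w^{1/2}(1+\tau^2)^{-1}\,d\tau\lesssim\sigma^3\|a\|_{L^2}$, again within the allowed error. Summing the two regimes yields the lemma. The main obstacle---and the whole point of the statement---is the borderline nature of the $\tau$-integral: in contrast to the case $p>2$ of Lemma~\ref{L:approx1}, the kernel $(1+\tau^2)^{-1/2}$ is not integrable on $[0,\infty)$, so one cannot merely quote an approximate-identity lemma and read off a finite constant. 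Instead the divergence has to be followed through the cutoff at $\tau\sim\sigma^{-1}$, and the delicate part is arranging the error estimates so that the half-power gains on each side of the cutoff conspire to make every non-leading term genuinely $O(\sigma^3)$ and free of logarithms.
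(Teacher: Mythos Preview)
Your argument is correct. The physical-space decomposition you outline does close: the $|a(x_0+y)-a(x_0)|\le\|a'\|_{L^2}|y|^{1/2}$ bound produces the extra factor $w^{1/2}$ needed to make the near-regime remainder integrate to $\sigma^{7/2}\int_0^{1/\sigma}(1+\tau^2)^{-1/4}\,d\tau\lesssim\sigma^3$, and the Cauchy--Schwarz bound in the far regime likewise yields $\sigma^{5/2}\int_{1/\sigma}^\infty(1+\tau^2)^{-3/4}\,d\tau\lesssim\sigma^3$, both free of logarithms. The leading term comes out exactly via $\operatorname{arcsinh}(\sigma^{-1})=|\log\sigma|+O(1)$ as you say.

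The paper takes a genuinely different route: it passes to the Fourier side, writing the space-time integral as $\sigma^3\int e^{ix_0\xi}\hat a(\xi)\hat K(\sigma\xi)\,d\xi$ for $K(x)=\int_0^\infty|e^{it\Delta}\varphi|^4\,dt$, and then analyzes the logarithmic blow-up of $\hat K(\xi)$ as $\xi\to 0$, showing $\hat K(\xi)=\tfrac{1}{\sqrt 2}\log(1/|\xi|)+O(1)$ by splitting the defining $t$-integral at $t=|\xi|^{-1}$. The frequency split $|\sigma\xi|\lessgtr 1$ in the paper is dual to your time split $\tau\lessgtr\sigma^{-1}$, and the paper's error control uses $\|\hat a\|_{L^1}\lesssim\|a\|_{H^1}$ in place of your pointwise H\"older-type bound. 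Your approach is more elementary---no Plancherel, no asymptotic expansion of $\hat K$---and arguably makes the mechanism (a Gaussian approximate identity whose width sweeps through all scales up to $O(1)$) more transparent; the paper's approach has the advantage of isolating the universal kernel $\hat K$ once and for all, which could be convenient if one wanted to push the analysis further.
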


\begin{proof} Let
\[
K(x)=\int_0^\infty |e^{it\Delta}\varphi(x)|^4\,dt = \int_0^\infty (1+t^2)^{-1}\exp\bigl\{-\tfrac{|x|^2}{1+t^2}\bigr\}\,dt, 
\]
so that by directly computing a Gaussian integral we may obtain
\begin{align*}
\hat K(\xi) = (2\pi)^{-\frac12}\int_\R e^{-ix\xi}K(x)\,dx & = \tfrac{1}{\sqrt{2}}\int_0^\infty (1+t^2)^{-\frac12} \exp\{-\tfrac{\xi^2(1+t^2)}{4}\}\,dt. 
\end{align*}
Now, by Plancherel's Theorem (and the fact that $K$ and $\hat K$ are real-valued) we may write
\begin{align*}
\int_0^\infty \int_\R a(x) |e^{it\Delta}\varphi_{\sigma,x_0}(x)|^4\,dx\,dt & = \sigma^3\int_\R e^{ix_0\xi}\hat a(\xi)\hat K(\sigma\xi)\,d\xi \\
& = \sigma^3\int_{|\sigma\xi|\leq 1}e^{ix_0\xi}\hat a(\xi)\hat K(\sigma\xi)\,d\xi \\
& \quad + \sigma^3\int_{|\sigma\xi|>1}e^{ix_0\xi}\hat a(\xi)\hat K(\sigma\xi)\,d\xi.
\end{align*}
Noting that $\|\hat K\|_{L_\xi^\infty(|\xi|\geq 1)}\lesssim 1$, we may obtain
\[
\sigma^3\biggl|\int_{|\sigma\xi|>1}e^{ix_0\xi}\hat a(\xi)\hat K(\sigma\xi)\,d\xi\biggr| \lesssim \sigma^3 \|\hat a\|_{L^1} \lesssim \sigma^3\|a\|_{H^1},
\]
which is acceptable.  To proceed, we need to understand the behavior of $\hat K(\xi)$ for $0<|\xi|\leq 1$. To this end, we use the Taylor expansion of $\exp\{-\tfrac{t^2\xi^2}{4}\}$ for $t\in[0,|\xi|^{-1}]$ to write
\begin{align}
\hat K(\xi) & =\tfrac{1}{\sqrt{2}}\exp\{-\tfrac{\xi^2}{4}\}\int_0^{|\xi|^{-1}} (1+t^2)^{-\frac12}\,dt \label{Khat1}
\\
&\quad + \tfrac{1}{\sqrt{2}} \exp\{-\tfrac{\xi^2}{4}\}\sum_{n=1}^\infty\tfrac{(-1)^n \xi^{2n}}{4^n n!} \int_0^{|\xi|^{-1}} (1+t^2)^{-\frac12}t^{2n}\,dt \label{Khat2}
\\ 
& \quad + \tfrac{1}{\sqrt{2}} \exp\{-\tfrac{\xi^2}{4}\} \int_{|\xi|^{-1}}^\infty (1+t^2)^{-\frac12} \exp\{-\tfrac{t^2\xi^2}{4}\}\,dt.\label{Khat3} 
\end{align}
By direct calculation and the identity $\sinh^{-1}(x) = \log(x+\sqrt{1+x^2})$, 
\begin{align*}
\eqref{Khat1} & = \tfrac{1}{\sqrt{2}} \exp\{-\tfrac{\xi^2}{4}\} \sinh^{-1}(|\xi|^{-1}) \\ 
& = \tfrac{1}{\sqrt{2}}\exp\{-\tfrac{\xi^2}{4}\}\bigl[\log(\tfrac{1}{|\xi|}) + \log[1+(1+\xi^2)^{\frac12}]\bigr] \\
& = \tfrac{1}{\sqrt{2}}\log(\tfrac{1}{|\xi|}) + \mathcal{O}(1) \qtq{for}|\xi|\leq 1. 
\end{align*} 
Furthermore, we have
\[
\eqref{Khat2} + \eqref{Khat3} = \mathcal{O}(1) \qtq{for}|\xi|\leq 1. 
\]
Thus we may write 
\[
\hat K(\sigma\xi) = \tfrac{1}{\sqrt{2}}\log(\tfrac{1}{\sigma}) + \tfrac{1}{\sqrt{2}} \log(\tfrac{1}{|\xi|}) + \mathcal{O}(1) \qtq{for}0<|\xi|\leq \tfrac{1}{\sigma}
\]
and apply the Fourier inversion formula to obtain  
\begin{align}
\biggl| \sigma^3 \int_{|\sigma\xi|\leq 1} & e^{ix_0\xi}\hat a(\xi) \hat K(\sigma\xi)\,d\xi -  \tfrac{1}{\sqrt{2}}\sigma^3|\log \sigma| a(x_0)\biggr| \nonumber \\
& \lesssim\sigma^3|\log\sigma| \int_{|\xi|>\sigma^{-1}} |\hat a(\xi)| \,d\xi \label{Khat-e1} \\
& \quad + \sigma^3\int |\hat a(\xi)| |\log(|\xi|)|\,d\xi + \sigma^3\int |\hat a(\xi)|\,d\xi. \label{Khat-e2}
\end{align}
Noting that
\[
\int \langle \xi\rangle^s |\hat a(\xi)|\,d\xi \lesssim \|a\|_{H^1} \qtq{for}s\in(0,\tfrac12)
\]
and $\log(|\xi|)\in L^2(\{|\xi|<1\})$, we can derive that
\[
|\eqref{Khat-e1}|+|\eqref{Khat-e2}|\lesssim \sigma^3\|a\|_{H^1},
\]
which is acceptable. 

Collecting the identities and estimates above, we complete the proof.  \end{proof}

%%%%%%%%%%%%%%%%%%%%
\section{Stability for the scattering map}\label{S:4}

In this section we prove Theorems~\ref{T1}~and~\ref{T2}.

\begin{proof}[Proof of Theorems~\ref{T1}~and~\ref{T2}] We let $p\in[2,4]$ and let $S_a$ and $S_b$ be the scattering maps associated with nonlinearities $a|u|^p u$ and $b|u|^p u$, respectively. Let $B_\eta\subset L^2$ be the common domain of $S_a$ and $S_b$. 

We let $x_0\in\R$ and $\sigma>0$ and define $\varphi_{\sigma,x_0}(x_0)=\varphi(\tfrac{x-x_0}{\sigma})$, with $\varphi(x)=\exp\{-\tfrac{x^2}{4}\}$. Note that
\begin{equation}\label{Gaussian-norm}
\|\varphi_{\sigma,x_0}\|_{L^2} \lesssim \sigma^{\frac12},
\end{equation}
so that $\varphi_{\sigma,x_0}\in B_\eta$ for all $0<\sigma\ll 1$. 

First consider the case $p>2$.  We denote
\[
\|a\|_X = \|a\|_{W^{1,\infty}} + \|a\|_{L^{\frac{2}{4-p}}}.
\]

Using \eqref{Gaussian-norm}, the Born approximation (Lemma~\ref{L:Born1}), and the approximate identity lemma (Lemma~\ref{L:approx1}), we obtain
\begin{align*}
\sigma^3 \lambda(p)[a(x_0)-b(x_0)] & = i\langle S_a(\varphi_{\sigma,x_0})-S_b(\varphi_{\sigma,x_0}),\varphi_{\sigma,x_0}\rangle \\
& \quad + \mathcal{O}\bigl\{\sigma^{3+s}[\|a\|_{X}+\|b\|_{X}]\bigr\} \\
& \quad + \mathcal{O}\bigl\{\sigma^{p+1}[\|a\|_{X}^2+\|b\|_X^2]\bigr\}
\end{align*}
for $0<s<1-\tfrac{2}{p}$.  In particular, the first error term arises from Lemma~\ref{L:approx1} and the second error term from Lemma~\ref{L:Born1}.  

Thus, recalling \eqref{Lip} and again using \eqref{Gaussian-norm}, we obtain
\[
\|a-b\|_{L^\infty} \lesssim_p \sigma^{-2} \|S_a-S_b\| + \sigma^s[\|a\|_X+\|b\|_X] + \sigma^{p-2}[\|a\|_X^2+\|b\|_X^2].
\]
Noting that $1-\tfrac{2}{p}=\tfrac{1}{p}[p-2]<p-2$ and recalling the assumption $\|S_a-S_b\|\ll 1$, we choose
\[
\sigma \sim \biggl[\frac{\|S_a-S_b\|}{1+\|a\|_X^2+\|b\|_X^2}\biggr]^{\frac{1}{2+s}}
\]
in order to obtain
\[
\|a-b\|_{L^\infty} \lesssim_p \|S_a-S_b\|^{\frac{s}{2+s}},
\]
with implicit constant depending on the $X$-norms of $a$ and $b$. This yields Theorem~\ref{T1}.  We remark that since $1-\frac{2}{p}\to 0$ as $p\downarrow 2$, this H\"older-type stability estimate breaks down as $p\downarrow 2$. 

We turn now to Theorem~\ref{T2} and set $p=2$.  In this case we write
\[
\|a\|_{X} = \|a\|_{L^1}+\|a\|_{H^1}. 
\]

In this case, we combine the Born approximation (Lemma~\ref{L:Born1}) and modified approximate identity lemma (Lemma~\ref{L:approx2}) to obtain
\begin{align*}
\tfrac{1}{\sqrt{2}}\sigma^3|\log \sigma| [a(x_0)-b(x_0)] & = i\langle S_a(\varphi_{\sigma,x_0})-S_b(\varphi_{\sigma,x_0}),\varphi_{\sigma,x_0}\rangle \\
& \quad + \mathcal{O}\bigl\{\sigma^{3}[\|a\|_{X}+\|b\|_{X}]\bigr\} \\
& \quad + \mathcal{O}\bigl\{\sigma^{3}[\|a\|_{X}^2+\|b\|_X^2]\bigr\}
\end{align*}
Thus 
\[
\|a-b\|_{L^\infty} \lesssim \sigma^{-2}|\log\sigma|^{-1} \|S_a-S_b\| + |\log\sigma|^{-1}\{\|a\|_X+\|a\|_X^2+\|b\|_X+\|b\|_X^2\}. 
\]
Choosing
\[
\sigma \sim \biggl[\frac{\|S_a-S_b\|}{1+\|a\|_X^2+\|b\|_X^2}\biggr]^{\frac12}
\]
leads to the estimate
\[
\|a-b\|_{L^\infty} \lesssim \biggl|\log\biggl(\frac{\|S_a-S_b\|}{1+\|a\|_{L^1\cap H^1}^2+\|b\|_{L^1\cap H^1}^2}\biggr)\biggr|^{-1},
\]
with implicit constant depending on the $X$-norms of $a$ and $b$. 
\end{proof} 

%%%%%%%%%%%%%%%%%%%%%%%%%
\section{Stability for the modified scattering map}\label{S:5}

In this section we prove Theorem~\ref{T3}.  

\begin{proof}[Proof of Theorem~\ref{T3}] We let $\tilde S_a,\tilde S_b$ be the modified scattering maps associated to inhomogeneities $a,b$. We write $B_\eta\subset H^{1,1}$ for the common domain of $S_a$ and $S_b$.  We let $x_0\in\R$, $\sigma>0$, and $\eps>0$, and define 
\[
\varphi_{\sigma,x_0}(x_0)=\varphi(\tfrac{x-x_0}{\sigma}),\qtq{with}\varphi(x)=\exp\{-\tfrac{x^2}{4}\}.
\]
Note that
\begin{equation}\label{Gaussian-norm2}
\|\varphi_{\sigma,x_0}\|_{H^{1,1}} \lesssim \sigma^{-\frac12},
\end{equation}
so that $\eps\varphi_{\sigma,x_0}\in B_\eta$ for $\eps\ll\sigma^{\frac12}$.  We also note that $\|\hat\varphi_{\sigma,x_0}\|_{L^1}\equiv \|\hat\varphi\|_{L^1}\lesssim 1$. 

Using Lemma~\ref{L:Born2}, we may write
\begin{align}
-i\eps^3\int_0^\infty&\int_\R [a(x)-b(x)]|e^{it\Delta}\varphi_{\sigma,x_0}|^4\,dx\,dt \label{mod1}\\
& = \langle \tilde S_a(\eps\varphi_{\sigma,x_0})-\tilde S_b(\eps\varphi_{\sigma,x_0}),\hat\varphi_{\sigma,x_0}\rangle \label{mod2}\\  
& \quad + \tfrac{1}{2i}\log(1+\tfrac{1}{2\eps})\langle [|\tilde S_a|^2\tilde S_a](\eps\varphi_{\sigma,x_0})-[|\tilde S_b|^2\tilde S_b](\eps\varphi_{\sigma,x_0}),\hat\varphi_{\sigma,x_0}\rangle \label{mod3} \\
& \quad +\mathcal{O}(\eps^4\sigma^{-\frac52}),\label{mod4}
\end{align}
where the final error term also depends on various norms of $a$ and $b$. 

Recalling the definition in \eqref{Lip2}, we begin with the estimate
\begin{align*}
|\eqref{mod2}| & \lesssim \eps \|\tilde S_a - \tilde S_b\|\, \|\varphi_{\sigma,x_0}\|_{H^{1,1}}\|\hat\varphi_{\sigma,x_0}\|_{L^1}  \\
& \lesssim \eps\sigma^{-\frac12}\|S_a-S_b\|\|\hat\varphi\|_{L^1} \\
&\lesssim \eps\sigma^{-\frac12}\|\tilde S_a-\tilde S_b\|.
\end{align*}
Using the fact that
\[
\|\tilde S_a\|_{H^{1,1}\to L^\infty}+\|\tilde S_b\|_{H^{1,1}\to L^\infty} \lesssim 1, 
\]
we estimate
\begin{align*}
|\eqref{mod3}| & \lesssim \log(1+\tfrac{1}{2\eps})\|\hat\varphi_{\sigma,x_0}\|_{L^1}\\
& \quad \times \{\|\tilde S_a(\eps\varphi_{\sigma,x_0})\|_{L^\infty}^2+\|\tilde S_b(\eps\varphi_{\sigma,x_0})\|_{L^\infty}^2\}\|[\tilde S_a-\tilde S_b](\eps\varphi_{\sigma,x_0})\|_{L^\infty} \\
& \lesssim \eps^3\log(1+\tfrac{1}{2\eps})\|\hat\varphi\|_{L^1}\|\varphi_{\sigma,x_0}\|_{H^{1,1}}^3\|\tilde S_a-\tilde S_b\|\\
& \lesssim \eps^3\log(1+\tfrac{1}{2\eps})\sigma^{-\frac32}\|\tilde S_a-\tilde S_b\|. 
\end{align*}

For the main term \eqref{mod1}, we apply Lemma~\ref{L:approx2} to write
\[
\eqref{mod1} = -\eps^3\sigma^3\tfrac{i}{\sqrt{2}}|\log\sigma| [a(x_0)-b(x_0)]+ \mathcal{O}(\sigma^3\eps^3).
\]

Combining the estimates above, we find
\begin{align*}
\|a-b\|_{L^\infty} & \lesssim [\eps^{-2}\sigma^{-\frac72}+\log(1+\tfrac{1}{2\eps})\sigma^{-\frac92}]|\log\sigma|^{-1}\|\tilde S_a-\tilde S_b\|  \\
& \quad + C_{a,b}[1+\eps\sigma^{-\frac{11}{2}}]|\log\sigma|^{-1},
\end{align*}
where $C_{a,b}$ is a constant depending on various norms of $a$ and $b$.  Specializing to $\eps=\sigma^{\frac{11}{2}}\ll\sigma^{\frac12}$ and using the straightforward estimate $|\log\langle x\rangle|\lesssim_{\beta} \langle x\rangle^{\beta}$ for any $\beta>0$, we can simplify the estimate above to
\[
\|a-b\|_{L^\infty} \lesssim [\sigma^{-\frac{29}{2}}\|\tilde S_a - \tilde S_b\| + C_{a,b}]|\log\sigma|^{-1} 
\]
We now choose 
\[
\sigma \sim \biggl[\frac{\|\tilde S_a-\tilde S_b\|}{1+C_{a,b}}\biggr]^{\frac{2}{29}}
\]
to obtain the desired estimate
\[
\|a-b\|_{L^\infty} \lesssim \biggl| \log\biggl(\frac{\|\tilde S_a-\tilde S_b\|}{1+C_{a,b}}\biggr)\biggr|^{-1},
\]
with implicit constant depending on norms of $a$ and $b$. \end{proof}

\end{document}